\newtheorem{lemma}{Lemma}
\newtheorem{corollary}{Corollary}
\newtheorem{remark}{Remark}
\newcommand{\jump}[1]{\llbracket #1 \rrbracket}
\newcommand{\av}[1]{\{\!\!\{#1\}\!\!\}}
\title{An algebraic
  preconditioner for the exactly divergence-free discontinuous
  Galerkin method for Stokes}
\author[$\dagger$]{Sander Rhebergen}
\affil[$\dagger$]{Department of Applied Mathematics, University of Waterloo,
  Waterloo, ON N2L~3G1, Canada; \href{mailto:srheberg@uwaterloo.ca}{srheberg@uwaterloo.ca}.}
\author[$\ddag$]{Ben S. Southworth}
\affil[$\ddag$]{Los Alamos National Laboratory, Los Alamos NM, U.S.A.;
  \href{mailto:southworth@lanl.gov}{southworth@lanl.gov}}
\begin{document}
\maketitle
\begin{abstract}
  We present an optimal preconditioner for the exactly divergence-free
  discontinuous Galerkin (DG) discretization of Cockburn, Kanschat,
  and Sch\"otzau [\textit{J. Sci. Comput.}, 31 (2007), pp. 61--73] and
  Wang and Ye [\textit{SIAM J. Numer. Anal.}, 45 (2007),
  pp. 1269--1286] for the Stokes problem. This DG method uses finite
  elements that use an $H({\rm div})$-conforming basis, thereby
  significantly complicating its solution by iterative
  methods. Several preconditioners for this Stokes discretization have
  been developed, but each is based on specialized solvers or
  decompositions, and do not offer a clear framework to generalize to
  Navier--Stokes. To avoid requiring custom solvers, we hybridize the
  $H({\rm div})$-conforming finite element so that the velocity lives
  in a standard $L^2$-DG space, and present a simple algebraic
  preconditioner for the extended hybridized system. The proposed
  preconditioner is optimal in $h$, super robust in element order
  (demonstrated up to 5th order), effective in 2d and 3d, and only
  relies on standard relaxation and algebraic multigrid methods
  available in many packages. The hybridization also naturally extends
  to Navier--Stokes, providing a potential pathway to effective
  black-box preconditioners for exactly divergence-free DG
  discretizations of Navier--Stokes.
\end{abstract}
\section{Introduction}
\label{sec:introduction}

In this paper we present a preconditioner for the exactly divergence
free discontinuous Galerkin (DG) discretization
\cite{Cockburn:2007b,Wang:2007} of the Stokes equations. Given an
external body force $\boldsymbol{f} : \Omega \to \mathbb{R}^d$ defined
on a domain $\Omega$ in $d=2$ or $d=3$ dimensions, the Stokes
equations for the velocity $\boldsymbol{u} : \Omega \to \mathbb{R}^d$
and (kinematic) pressure $p : \Omega \to \mathbb{R}$ are given by:
\begin{equation}
  \label{eq:stokes}
  -\nabla^2\boldsymbol{u} + \nabla p = \boldsymbol{f} \text{ in } \Omega \qquad \text{ and } \qquad
  \nabla\cdot \boldsymbol{u} = 0 \text{ in } \Omega.
\end{equation}
Given $\boldsymbol{g} : \partial \Omega \to \mathbb{R}^d$, we impose
$\boldsymbol{u} = \boldsymbol{g}$ on $\partial\Omega$, the boundary of
the domain, and require that the pressure has zero mean.

The exactly divergence-free DG method of Cockburn, Kanschat, and
Sch\"otzau \cite{Cockburn:2007b} and Wang and Ye \cite{Wang:2007}
results in a discretization for incompressible flows that is locally
conservative, energy-stable, optimally convergent, and pressure-robust
\cite{John:2017}. To achieve this they use finite elements that use an
$H({\rm div})$-conforming basis such as the Raviart--Thomas (RT)
\cite{Raviart:1977} and Brezzi--Douglas--Marini (BDM)
\cite{Brezzi:1985} elements.

Since its introduction, this discretization has been applied in
different forms to many problems, including the Navier--Stokes
equations \cite{Cockburn:2007b}, Stokes equations \cite{Wang:2007},
the Boussinesq equations \cite{Oyarzua:2014}, incompressible Euler
equations \cite{Guzman:2017}, coupled Stokes--Darcy flow
\cite{Kanschat:2010}, and incompressible magnetohydrodynamics problems
\cite{Greif:2010}.  Unfortunately, custom solvers are necessary when
dealing with $H({\rm div})$-conforming finite elements, and relatively
few have been developed for incompressible flow.
Even a standard ``model'' problem like the (vector) Laplacian requires
significant care when posed in $H({\rm div})$, and many standard
parallel preconditioners such as multigrid or algebraic multigrid (AMG)
do not work out-of-the-box. 

Auxiliary space preconditioning techniques have been developed for 
$H({\rm div})$ and $H({\rm curl})$ \cite{Hiptmair.2007}, but most of
these methods are developed specifically for bi-linear forms
$({\rm div} u, {\rm div} v)$. Auxiliary space preconditioning
specifically designed for DG discretizations of Stokes was
introduced in \cite{Dios.2014}. In 2d, the preconditioning requires
solving four scalar Laplacian problems. According to \cite{Dios.2014},
extensions to 3d require an additional solution of a mixed
discretization of the Laplacian, as well as an approximation
via lumping mass matrices in defining orthogonal projections within
the preconditioner; however, no 3d implementation or numerical
results are provided in \cite{Dios.2014}. Aside from the more 
complex extension to 3d, a more notable downside of the 
auxiliary-space approach is that it does not provide a clear path
to generalize to Navier--Stokes, as the underlying theory
and framework rely heavily on symmetry and definiteness.

To our knowledge, the only other effective preconditioners
for divergence-free DG discretizations of the Stokes equation are
geometric multigrid methods with specialized overlapping block
smoothers for $H({\rm div})$ \cite{Kanschat:2015,Kanschat.2016,Hong.2016}. 
Such methods have proven effective and robust on Stokes, but also have
several drawbacks. For one, geometric multigrid methods with custom
smoothers are far from ``black-box,'' typically requiring nontrivial software
infrastructure to implement or to apply with existing libraries. In
addition, extensions to Navier--Stokes with moderate or strong
advection is challenging. Geometric multigrid for problems with
advection generally requires line relaxation and/or semi-coarsening
(generally both or alternating line relaxation for non-grid-aligned
advection) \cite{roberts2004textbook,brown2000semicoarsening}. Each
of these immediately limit one to structured grids (particularly in any 
parallel setting); moreover, line relaxation scales poorly in parallel,
and its 3d analogue ``plane'' relaxation is challenging or sometimes
infeasible to implement on more complex geometries \cite{stuben2001review}. 

Thus, although there have been several robust preconditioners developed
for divergence-free DG discretizations of Stokes, there remain several
gaps in the literature. First, to our knowledge there have been no
effective parallel preconditioners (that is, either an all-at-once approach
or block preconditioning with effective inner solvers as well) proposed
for divergence-free DG
discretizations of Navier--Stokes. Moreover, none of the Stokes
methods currently available offer a natural pathway to generalize to
large-scale simulations of Navier--Stokes.\footnote{Of course
if one considers problems small enough to fit on one processor, alternating
line relax or ordered Gauss--Seidel that follows the characteristics are
both feasible and likely effective.} Second, even for Stokes,
existing methods are relatively intrusive in terms of implementation.
This is okay in principle, but it is often beneficial to have
robust methods that are straightforward to implement using existing
technology and widely used software libraries.

The main objective of this paper is to present a preconditioner for
divergence-free DG discretizations of Stokes that (i) is based on
standard ``black-box'' components, and (ii) provides a potential
framework to generalize to Navier--Stokes. To achieve this we
hybridize the BDM element following \cite{Arnold:1985} by introducing
a Lagrange multiplier to enforce the normal continuity of the velocity
approximation across element boundaries. As a result, the velocity of
the hybridized system lives in $L^2$, and standard preconditioners
for DG discretizations of (advection-)diffusion can then be applied
to solve the velocity block. This is a similar principle as proposed
in \cite{Lee:2017} for solving grad-div problems in $H({\rm div})$.
We couple this with the introduction of
a simple preconditioner for the hybridized system, which we demonstrate
to be robust in $h$ and element order, and naturally applicable in
2d and 3d.

Solvers for hybridized systems were studied also in
\cite{Dobrev:2019}, however they studied problems in
$H({\rm div})$-spaces discretized by conforming elements. In the case
of the divergence-free DG method, the BDM element is non-conforming
for the Stokes equations. A consequence is that, unlike
\cite{Dobrev:2019}, we are unable to reduce the hybridized system to a
global problem for the Lagrange multipliers only (this would require a
hybridizable DG (HDG) method which we do not discuss here; see, e.g.,
\cite{Cockburn:2014b,Lehrenfeld:2016,Rhebergen:2018a}). Instead, we
obtain a saddle-point problem for the velocity, pressure, and Lagrange
multiplier unknowns. We will show that the pressure/Lagrange
multiplier Schur complement is spectrally equivalent to a $2 \times 2$
block diagonal matrix consisting of differently weighted pressure and
Lagrange multiplier mass-matrices. We will demonstrate that weighing
the pressure mass-matrix more heavily than the Lagrange multiplier
mass-matrix significantly outperforms a preconditioner in which both
mass-matrices are weighted equally (we remark that the preconditioner
for the pressure/Lagrange multiplier Schur complement based on equally
weighting the pressure and Lagrange multiplier mass-matrices is the
same as the Schur complement approximation analyzed for a
divergence-free HDG method for Stokes in
\cite{Rhebergen:2018b}). Combining this with standard AMG for the
$L^2$-DG discretization of the vector Laplacian results in an optimal
preconditioner for the divergence-free DG discretization of Stokes.

The remainder of this paper is organized as follows. In
\cref{s:hdiv-dg-fem} we summarize the exactly divergence-free DG
method for Stokes and its hybridization. The preconditioner is
presented and analyzed in \cref{s:precon} and its performance is
discussed in \cref{s:numex}. Conclusions are drawn in
\cref{sec:conclusions}.

\section{The exactly divergence-free DG method for Stokes}
\label{s:hdiv-dg-fem}

\subsection{Notation}

Let $\mathcal{T}$ be a triangulation of mesh size~$h_{K}$ of the
domain $\Omega$ into simplices~$\{K\}$. We denote the set of all
interior and boundary facets of $\mathcal{T}$ by $\mathcal{F}_I$ and
$\mathcal{F}_B$, respectively, and the set of all facets is denoted by
$\mathcal{F} = \mathcal{F}_I \cup \mathcal{F}_B$. For element $K$ and
boundary $\partial K$, the outward unit normal vector is denoted
by~$\boldsymbol{n}$. On an interior facet shared by elements $K^+$ and
$K^-$, the average $\av{\cdot}$ and jump $\jump{\cdot}$ operators are
defined in the usual way, i.e, for a scalar $q$
\begin{equation}
  \av{q} := \tfrac{1}{2}\del{q^+ + q^-}, \qquad
  \jump{q\boldsymbol{n}} := q^+\boldsymbol{n}^+ + q^-\boldsymbol{n}^-,
\end{equation}
where we remark that $\boldsymbol{n}^+ = -\boldsymbol{n}^-$. On
boundary facets we set $\av{q} := q$ and
$\jump{q\boldsymbol{n}} := q\boldsymbol{n}$. Average and jump
operators for vectors and tensors are defined similarly.

The finite element space for the pressure is defined as
\begin{equation}
  \label{eq:pressure_fs}
  Q_h^k := \cbr[1]{ q_h \in L^2(\mathcal{T}),\ q_h \in P_{k}(K)\
    \forall K\in\mathcal{T}} \cap L^2_0(\Omega),
\end{equation}
where $P_k(K)$ denotes the space of polynomials of degree $k > 0$ on
element $K$. For the velocity approximation, we consider the
Brezzi--Douglas--Marini (BDM) function spaces. Denote BDM spaces of
order $k$ by ${\rm {\bf BDM}}_h^k$. The pair
${\rm {\bf BDM}}_h^k\backslash Q_h^{k-1}$ forms an inf-sup stable
finite element pair that furthermore has the desirable property that
$\nabla\cdot {\rm {\bf BDM}}_h^k = Q_h^{k-1}$.

We denote the $L^2$-norm on an element $K \in \mathcal{T}$, a facet
$F \in \mathcal{F}$, and domain $\Omega$ by, respectively,
$\norm{\cdot}_K$, $\norm{\cdot}_F$, and $\norm{\cdot}$.  In what
follows we will also require the following norm for functions
$\boldsymbol{v} \in {\rm {\bf BDM}}_h^k$:
\begin{equation}
  \norm{\boldsymbol{v}}_{1,h}^2 := \sum_{K \in \mathcal{T}} \norm{\nabla \boldsymbol{v}}_{K}^2
  + \sum_{F \in \mathcal{F}} \int_F h_F^{-1}|\jump{\boldsymbol{v} \otimes \boldsymbol{n}}|^2 \dif s.
\end{equation}

\subsection{Discretization}

The exactly divergence-free symmetric interior penalty (SIP)-DG method
of \cite{Cockburn:2007b,Wang:2007} applied to the Stokes equations
\cref{eq:stokes} is given by: find
$(\boldsymbol{u}_h, p_h) \in {\rm {\bf BDM}}_h^k \times Q_h^{k-1}$
such that
\begin{equation}
  \label{eq:DGmethod}
  a_h(\boldsymbol{u}_h, \boldsymbol{v}_h)
  + \tilde{b}_h(p_h, \boldsymbol{v}_h) + \tilde{b}_h(q_h, \boldsymbol{u}_h)
  = \tilde{l}_h(\boldsymbol{v}_h), \quad \forall (\boldsymbol{v}_h, q_h)
  \in {\rm {\bf BDM}}_h^k \times Q_h^{k-1},
\end{equation}
where the bi-linear forms are defined as
\begin{subequations}
  \begin{align}
    \label{eq:bilinforms_a}
    a_h(\boldsymbol{u}, \boldsymbol{v}) 
    =&
       \sum_{K\in\mathcal{T}}\int_K \nabla \boldsymbol{u} : \nabla \boldsymbol{v} \dif x
       + \sum_{F\in\mathcal{F}} \int_F \frac{\eta}{h_F}\jump{\boldsymbol{u}}\cdot\jump{\boldsymbol{v}}\dif s
    \\ \nonumber
     &- \sum_{F\in\mathcal{F}}\int_F \jump{\boldsymbol{u}}\otimes \boldsymbol{n}^+ : \av{\nabla \boldsymbol{v}} \dif s
       - \sum_{F\in\mathcal{F}} \int_F \jump{\boldsymbol{v}}\otimes \boldsymbol{n}^+ : \av{\nabla \boldsymbol{u}} \dif s,
    \\
    \label{eq:bilinforms_b}
    \tilde{b}_h(p, \boldsymbol{v})
    =& -\sum_{K\in\mathcal{T}} \int_K p \nabla \cdot \boldsymbol{v} \dif x,
  \end{align}
  \label{eq:bilinforms}
\end{subequations}
with $\eta>0$ a penalty parameter, and the linear form is defined as
\begin{equation}
  \tilde{l}_h(\boldsymbol{v}) =
  \sum_{K\in\mathcal{T}} \int_K \boldsymbol{f} \cdot \boldsymbol{v} \dif x
  - \sum_{F\in\mathcal{F}_B}\int_F \pd{\boldsymbol{v}}{\boldsymbol{n}} \cdot \boldsymbol{g} \dif s
  + \sum_{F\in\mathcal{F}_B} \int_F  \frac{\eta}{h_F} \boldsymbol{g} \cdot \boldsymbol{v} \dif s.
\end{equation}

We now summarize some properties of the above bi-linear forms (see
\cite{Cockburn:2004b,Cockburn:2007b,Wang:2007}). First, there exist
constants $\alpha > 0$ and $c_a > 0$, independent of $h$, such that if
$\eta$ is sufficiently large then for all
$\boldsymbol{u}_h, \boldsymbol{v}_h \in {\rm {\bf BDM}}_h^k$,
\begin{equation}
  \label{eq:stab_bound_ah}
  a_h(\boldsymbol{v}_h, \boldsymbol{v}_h) \ge \alpha\norm{\boldsymbol{v}_h}_{1,h}^2
  \quad\text{and}\quad
  \envert[1]{a_h(\boldsymbol{u}_h, \boldsymbol{v}_h)} \le c_a\norm{\boldsymbol{u}_h}_{1,h}\norm{\boldsymbol{v}_h}_{1,h}.
\end{equation}
Furthermore, there exist constants $c_b > 0$ and $\beta > 0$,
independent of $h$, such that for all
$\boldsymbol{v}_h \in {\rm {\bf BDM}}_h^k$ and for all
$q_h \in Q_h^{k-1}$,
\begin{equation}
  \label{eq:bound_bh}
  \envert[1]{\tilde{b}_h(q_h, \boldsymbol{v}_h)} \le c_b \norm{\boldsymbol{v}_h}_{1,h}\norm{q_h}
  \quad \text{and} \quad
  \beta\norm{q_h} \le \sup_{\boldsymbol{v}_h \in {\rm {\bf BDM}}_h^k}
    \frac{ \tilde{b}_h(q_h, \boldsymbol{v}_h) }{\norm{\boldsymbol{v}_h}_{1,h}}.
\end{equation}

\subsection{Hybridized discretization}

Standard multigrid solvers are ineffective on the BDM element
\cite{Arnold:2000}. This has led to the creation of custom geometric
multigrid methods with overlapping block smoothers for BDM elements
\cite{Kanschat:2015,Kanschat.2016,Arnold:2000,Hong.2016}. Here we
pursue a different approach, hybridizing the discretization as in
\cite{Dobrev:2006}, so that the velocity is approximated by functions
in an $L^2$ space rather than functions in an $H({\rm div})$ space,
and standard multigrid and AMG methods can be applied. For this, let
us introduce, respectively, the following DG velocity space and
Lagrange multiplier space:
\begin{subequations}
  \begin{align}
    \boldsymbol{V}_h^k
    &:= \cbr[0]{\boldsymbol{v}_h \in \boldsymbol{L}^2(\Omega):\ \boldsymbol{v}_h \in P_k(K)^d\ \forall K \in \mathcal{T}},
    \\
    M_h^k
    &:= \cbr[0]{\xi_h \in L^2(\mathcal{F}):\ \xi_h \in P_k(F)\ \forall F \in \mathcal{F}},
  \end{align}
\end{subequations}
For functions in the Lagrange multiplier space, $\xi \in M_h$, we
define
\[
  \norm{\xi}_{\lambda} := \del[0]{\sum_{F \in \mathcal{F}} h_F
  \norm[0]{\xi}_F^2}^{1/2}.
\]

We then obtain the extended problem: find
$(\boldsymbol{u}_h, p_h, \lambda_h) \in \boldsymbol{V}_h^k \times
Q_h^{k-1} \times M_h^k$ such that
\begin{equation}
  \label{eq:hybridized}
  a_h(\boldsymbol{u}_h, \boldsymbol{v}_h) + b_h((p_h,\lambda_h), \boldsymbol{v}_h)
  + b_h((q_h,\xi_h), \boldsymbol{u}_h)
  = l_h(\boldsymbol{v}_h, \xi_h),
\end{equation}
for all
$(\boldsymbol{v}_h, q_h, \xi_h) \in \boldsymbol{V}_h^k \times
Q_h^{k-1} \times M_h^k$, where
\begin{subequations}
  \begin{align}
    b_h((p,\lambda), \boldsymbol{v})
    &= \tilde{b}_h(p, \boldsymbol{v}) + \sum_{F \in \mathcal{F}}\int_F \jump{\boldsymbol{v} \cdot \boldsymbol{n}}\lambda \dif s,
    \\
    l_h(\boldsymbol{v}, \xi)
    &= \tilde{l}_h(\boldsymbol{v}) + \sum_{F \in \mathcal{F}_B} \int_F \boldsymbol{g} \cdot \boldsymbol{n} \xi \dif s.
  \end{align}  
\end{subequations}
The coercivity and boundedness of $a_h$ \eqref{eq:stab_bound_ah},
also holds for functions $\boldsymbol{u}_h, \boldsymbol{v}_h \in
\boldsymbol{V}_h^k$ (see, e.g., \cite[Chapter 6]{Pietro:book}).
Furthermore, by the Cauchy--Schwarz inequality,
\begin{equation}
  \label{eq:bhextbounded}
  \envert[1]{b_h((q_h, \xi_h), \boldsymbol{v}_h)} \le c_b \norm{\boldsymbol{v}_h}_{1,h}
  \del[0]{\norm{q_h}^2 + \norm{\xi_h}_{\lambda}^2}^{1/2},
\end{equation}
while a minor modification of \cite[Lemma 3]{Rhebergen:2018b} shows
that there exists a constant $\beta_{\lambda} > 0$ such that for all
$\xi_h \in M_h^k$:
\begin{equation}
  \label{eq:infsuplagrange}
  \beta_{\lambda} \norm{\xi_h}_{\lambda}
  \le \sup_{\boldsymbol{v}_h \in \boldsymbol{V}_h^k}
  \frac{ \sum_{F \in \mathcal{F}}\int_F \jump{\boldsymbol{v}_h \cdot \boldsymbol{n}}\xi_h \dif s}{\norm{\boldsymbol{v}_h}_{1,h}}.
\end{equation}
By \cite[Theorem 3.1]{Howell:2011} the inf-sup conditions in
\cref{eq:bound_bh} and \cref{eq:infsuplagrange} may be combined to
yield
\begin{equation}
  \label{eq:infsupbh}
  c_i \del[0]{\beta^2\norm{q_h}^2 + \beta_{\lambda}^2\norm{\xi_h}_{\lambda}^2}^{1/2} \le \sup_{\boldsymbol{v}_h \in \boldsymbol{V}_h^k}
  \frac{ b_h((q_h, \xi_h), \boldsymbol{v}_h) }{\norm{\boldsymbol{v}_h}_{1,h}}
  \quad \forall (q_h, \xi_h) \in Q_h^{k-1} \times M_h^k,
\end{equation}
with $c_i$ a positive constant independent of $h$.

We end this section by noting that the velocity and pressure solution
to the hybridized system \cref{eq:hybridized} is also the velocity and
pressure solution to \cref{eq:DGmethod}. As such, the velocity
solution to the hybridized system \cref{eq:hybridized} is exactly
divergence-free.

\section{Preconditioning}
\label{s:precon}

We now analyze a preconditioner for the extended system
\cref{eq:hybridized}.
Let $u \in \mathbb{R}^{n_u}$, $p \in \mathbb{R}^{n_p}$,
$\lambda \in \mathbb{R}^{n_l}$ be the vectors of the coefficients
associated with $\boldsymbol{u}_h \in \boldsymbol{V}_h^k$,
$p_h \in Q_h^{k-1}$, and $\lambda_h \in M_h^k$. We then define the
matrices $A$, $B$, and $C$ by
$a_h(\boldsymbol{v}_h, \boldsymbol{v}_h) = v^TAv$,
$b_h((q_h,0), \boldsymbol{v}_h) = q^T B v$, and
$b_h((0, \xi_h), \boldsymbol{v}_h) = \xi^T C v$,
for $v \in \mathbb{R}^{n_u}$, $q \in \mathbb{R}^{n_p}$,
$\xi \in \mathbb{R}^{n_l}$. The extended system \cref{eq:hybridized}
can now be written in matrix form as
\begin{equation}
  \label{eq:matsystem}
  \mathbb{A}{\bf x} = {\bf f} \leftrightarrow
  \begin{bmatrix}
    A & B^T & C^T \\
    B & 0 & 0 \\
    C & 0 & 0
  \end{bmatrix}
  \begin{bmatrix}
    u \\ p \\ \lambda
  \end{bmatrix}
  =
  \begin{bmatrix}
    f \\ 0 \\ g
  \end{bmatrix},
\end{equation}
where $f$ and $g$ are the vectors associated with, respectively,
$l_h(\boldsymbol{v}_h,0)$ and $l_h(0,\xi_h)$. Let $\mathcal{B}
\coloneqq [B^T\ C^T]^T$. We will furthermore denote
the negative Schur complement of the block $A$ in $\mathbb{A}$ by
\begin{equation}\label{eq:schur}
  \mathcal{S} = \mathcal{B}A^{-1}\mathcal{B}^T.
\end{equation}

To define the preconditioner we introduce the mass matrices $Q$
and $M$, which are defined by $\norm{q_h}^2 = q^TQq$ and
$\norm{\xi_h}_{\lambda}^2 = \xi^T M \xi$, and set
\begin{equation}\label{eq:mass-prec}
  \mathcal{M} \coloneqq {\rm bdiag}(\omega_q Q, \omega_m M),
\end{equation}
with $\omega_q$
and $\omega_m$ positive constants. The following Lemma proves a 
spectral equivalence between $\mathcal{M}$ and $\mathcal{S}$, by
choosing constants $\omega_q$ and $\omega_m$ based on inf-sup
parameters $\beta$ \eqref{eq:bound_bh} and $\beta_\lambda$
\eqref{eq:infsuplagrange}.

\begin{lemma}
  \label{lem:specEquivnew}  
  Let $\alpha$, $c_a$, $c_b$, $c_i$, $\beta$, and $\beta_\lambda$ be
  the constants given by
  \cref{eq:stab_bound_ah,eq:bhextbounded,eq:infsupbh} and let
  $c_I = c_i\min(\beta,\beta_{\lambda})$. Let
  $\omega_q \coloneqq \beta^2/\min(\beta^2,\beta_{\lambda}^2)$ and
  $\omega_m \coloneqq
  \beta_{\lambda}^2/\min(\beta^2,\beta_{\lambda}^2)$, and define
  $\mathcal{M}$ as in \eqref{eq:mass-prec}. Then $\mathcal{M}$ is
  spectrally equivalent to $\mathcal{S}$, and satisfies the bounds
  \begin{equation}\label{eq:spectral-bound}
    \frac{c_I}{\sqrt{c_a}} \le \frac{{\bf q}\mathcal{S}{\bf q}}{{\bf q}^T\mathcal{M}{\bf q}}
    \le \frac{c_b}{\sqrt{\alpha}} \quad \forall {\bf q} \in \mathbb{R}^{n_p + n_l}.
  \end{equation}
\end{lemma}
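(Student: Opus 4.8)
The plan is to reduce the algebraic claim to the analytic estimates already recorded for $a_h$ and $b_h$ through the variational characterization of a Schur complement. Since $a_h$ is symmetric and coercive, $A$ is symmetric positive definite and hence invertible, and the central step I would establish is the identity
\begin{equation*}
  \mathbf{q}^T\mathcal{S}\mathbf{q}
  = \del[0]{\mathcal{B}^T\mathbf{q}}^T A^{-1}\del[0]{\mathcal{B}^T\mathbf{q}}
  = \sup_{v\in\mathbb{R}^{n_u}\setminus\{0\}} \frac{\del[0]{\mathbf{q}^T\mathcal{B}v}^2}{v^TAv},
\end{equation*}
which follows from the elementary fact that $w^TA^{-1}w = \sup_{v\neq 0}(w^Tv)^2/(v^TAv)$ for symmetric positive definite $A$, the supremum being attained at $v = A^{-1}w$. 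Writing $\mathbf{q}=(q,\xi)$ and translating the matrix pairings back into the forms, $\mathbf{q}^T\mathcal{B}v = b_h((q_h,\xi_h),\boldsymbol{v}_h)$ and $v^TAv = a_h(\boldsymbol{v}_h,\boldsymbol{v}_h)$, this reads
\begin{equation*}
  \mathbf{q}^T\mathcal{S}\mathbf{q}
  = \sup_{\boldsymbol{v}_h\in\boldsymbol{V}_h^k} \frac{b_h((q_h,\xi_h),\boldsymbol{v}_h)^2}{a_h(\boldsymbol{v}_h,\boldsymbol{v}_h)}.
\end{equation*}
This identity is the crux of the argument: it converts the matrix object $\mathcal{S}$ into a Rayleigh-type quantity to which \cref{eq:stab_bound_ah,eq:bhextbounded,eq:infsupbh} apply directly.

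From here the two one-sided bounds follow by pairing these estimates. For the upper bound I would insert the boundedness of $b_h$, \cref{eq:bhextbounded}, into the numerator and the coercivity of $a_h$, \cref{eq:stab_bound_ah}, into the denominator; the factor $\norm{\boldsymbol{v}_h}_{1,h}^2$ then cancels inside the supremum, leaving $\mathbf{q}^T\mathcal{S}\mathbf{q} \le (c_b^2/\alpha)\del[0]{\norm{q_h}^2 + \norm{\xi_h}_\lambda^2}$. For the lower bound I would instead bound the denominator from above by the boundedness of $a_h$ and recognize that $\sup_{\boldsymbol{v}_h} b_h/\norm{\boldsymbol{v}_h}_{1,h}$ is precisely the quantity controlled from below by the combined inf-sup \cref{eq:infsupbh}; squaring gives $\mathbf{q}^T\mathcal{S}\mathbf{q} \ge (c_i^2/c_a)\del[0]{\beta^2\norm{q_h}^2 + \beta_\lambda^2\norm{\xi_h}_\lambda^2}$.

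The final step is to reconcile both estimates with $\mathbf{q}^T\mathcal{M}\mathbf{q} = \omega_q\norm{q_h}^2 + \omega_m\norm{\xi_h}_\lambda^2$. The weights are chosen precisely so that $\mathbf{q}^T\mathcal{M}\mathbf{q} = \del[0]{\beta^2\norm{q_h}^2 + \beta_\lambda^2\norm{\xi_h}_\lambda^2}/\min(\beta^2,\beta_\lambda^2)$, which makes the weighted sum in the lower bound a fixed multiple of $\mathbf{q}^T\mathcal{M}\mathbf{q}$; and since $\min(\beta^2,\beta_\lambda^2)\del[0]{\norm{q_h}^2+\norm{\xi_h}_\lambda^2} \le \beta^2\norm{q_h}^2+\beta_\lambda^2\norm{\xi_h}_\lambda^2$, the same choice bounds the unweighted sum in the upper bound by $\mathbf{q}^T\mathcal{M}\mathbf{q}$. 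With $c_I = c_i\min(\beta,\beta_\lambda)$ this yields a two-sided bound $c_I^2/c_a \le \mathbf{q}^T\mathcal{S}\mathbf{q}/(\mathbf{q}^T\mathcal{M}\mathbf{q}) \le c_b^2/\alpha$, i.e.\ the claimed spectral equivalence; positivity of $\mathcal{S}$ (so that this Rayleigh quotient is well defined for every $\mathbf{q}\neq 0$) is itself a consequence of the inf-sup condition \cref{eq:infsupbh}. I expect no deep obstacle: the only real design choice is the weighting, which is exactly what lets the two distinct inf-sup constants $\beta$ and $\beta_\lambda$ be absorbed into a single clean bound, and the remaining work is the bookkeeping of constants in the two pairings above.
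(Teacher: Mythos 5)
Your proof is correct and follows essentially the same route as the paper: the paper derives exactly the same two bounds on $\mathbf{q}^T\mathcal{B}v/\bigl((v^TAv)^{1/2}(\mathbf{q}^T\mathcal{M}\mathbf{q})^{1/2}\bigr)$ from \cref{eq:stab_bound_ah,eq:bhextbounded,eq:infsupbh} and the weighting identity \cref{eq:weighted_qmnew}, and then invokes \cite[Section 3]{Pestana:2015} for the passage to the Schur complement --- which is precisely the variational identity $\mathbf{q}^T\mathcal{S}\mathbf{q}=\sup_{v\ne 0}(\mathbf{q}^T\mathcal{B}v)^2/(v^TAv)$ that you spell out explicitly. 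One remark: your chain of inequalities (correctly) delivers the squared constants $c_I^2/c_a$ and $c_b^2/\alpha$, whereas the lemma as stated displays $c_I/\sqrt{c_a}$ and $c_b/\sqrt{\alpha}$; since the Rayleigh quotient $\mathbf{q}^T\mathcal{S}\mathbf{q}/\mathbf{q}^T\mathcal{M}\mathbf{q}$ is the \emph{square} of the generalized singular-value quantity being bounded, the squared form is what the argument actually yields, so this discrepancy lies in the paper's statement rather than in your proof, and the spectral-equivalence conclusion is unaffected.
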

\begin{proof}
  It directly follows from \cref{eq:stab_bound_ah,eq:infsupbh} that
  \begin{equation}
    \label{eq:lowerboundnew}
    \frac{c_I}{\sqrt{c_a}}
    \le \inf_{(q_h,\xi_h)\in Q_h^{k-1}\times M_h^k} \sup_{\boldsymbol{v}_h \in \boldsymbol{V}_h^k}
    \frac{b_h((q_h,\xi_h), \boldsymbol{v}_h)}{a_h(\boldsymbol{v}_h,\boldsymbol{v}_h)^{1/2}
      \min(\beta, \beta_{\lambda})^{-1}\del[0]{\beta^2\norm{q_h}^2 + \beta_{\lambda}^2\norm[0]{\xi_h}_{\lambda}^2}^{1/2}}.
  \end{equation}
  Define ${\bf q}=[q^T\ \xi^T]^T$ and let us note that
  \begin{equation}
    \label{eq:weighted_qmnew}
    {\bf q}^T\mathcal{M}{\bf q} = 
    \frac{1}{\min(\beta^2, \beta_{\lambda}^2)}
    \del[0]{\beta^2\norm{q_h}^2 + \beta_{\lambda}^2\norm[0]{\xi_h}_{\lambda}^2}
    \geq 
    \norm{q_h}^2 + \norm[0]{\xi_h}_{\lambda}^2.
  \end{equation}
  It now follows from
  \cref{eq:stab_bound_ah,eq:bhextbounded,eq:weighted_qmnew} that
  \begin{equation}
    \label{eq:upperboundnew}    
    \frac{b_h((q_h,\xi_h), \boldsymbol{v}_h)}{a_h(\boldsymbol{v}_h,\boldsymbol{v}_h)^{1/2}
      \min(\beta, \beta_{\lambda})^{-1}\del[0]{\beta^2\norm{q_h}^2 + \beta_{\lambda}^2\norm[0]{\xi_h}_{\lambda}^2}^{1/2}}
    \le \frac{c_b}{\sqrt{\alpha}}.
  \end{equation}
  We can then express \cref{eq:lowerboundnew,eq:upperboundnew} in matrix forms as, respectively,
  \begin{equation*}
    \frac{c_I}{\sqrt{c_a}}
    \le \min_{{\bf q}}\max_{v\ne 0} \frac{{\bf q}\mathcal{B}v}{(v^TAv)^{1/2}({\bf q}\mathcal{M}{\bf q})^{1/2}}
    \quad\text{and}\quad
    \frac{{\bf q}\mathcal{B}v}{(v^TAv)^{1/2}({\bf q}\mathcal{M}{\bf q})^{1/2}}
    \le \frac{c_b}{\sqrt{\alpha}}.    
  \end{equation*}
  Following now identical steps as \cite[Section 3]{Pestana:2015} (see
  also the proof of \cite[Lemma 6]{Rhebergen:2018b}) it follows that
  \begin{equation*}
    \frac{c_I}{\sqrt{c_a}} \le \frac{{\bf q}\mathcal{S}{\bf q}}{{\bf q}^T\mathcal{M}{\bf q}}
    \le \frac{c_b}{\sqrt{\alpha}} \quad \forall {\bf q} \in \mathbb{R}^{n_p + n_l},
  \end{equation*}
  which completes the proof.
\end{proof}

\begin{remark}[Parameter values]
  \label{rem:omegaqmremark}
  The parameters $\omega_q$ and $\omega_m$ depend on the ratios
  $\beta/\min(\beta,\beta_{\lambda})$ and
  $\beta_{\lambda}/\min(\beta,\beta_{\lambda})$, respectively. In
  practice, these ratios are unknown; however, by construction we
  have either $\omega_q=1$ or $\omega_m=1$, so it is a matter of
  knowing which is smaller of $\beta$ and $\beta_\lambda$, and
  then choosing an estimate for the other parameter. In our simulations,
  we have observed that $\omega_q > \omega_m$ offers significantly better
  performance, suggesting that $\beta > \beta_{\lambda}$, in which
  case $\omega_m = 1$ and $\omega_q = (\beta/\beta_{\lambda})^2$. In
  \cref{fig:iters} we plot the iteration count versus $\omega_q$ to determine
  the best choice for $\omega_q$.
\end{remark}

\begin{remark}[$\omega_q=\omega_m=1$]
  Instead of \cref{lem:specEquivnew}, if the inf-sup condition
  \cref{eq:infsupbh} is replaced by
  \begin{equation}
    \label{eq:infsupbhI}
    c_I \del[0]{\norm{q_h}^2 + \norm{\xi_h}_{\lambda}^2}^{1/2} \le \sup_{\boldsymbol{v}_h \in \boldsymbol{V}_h^k}
    \frac{ b_h((q_h, \xi_h), \boldsymbol{v}_h) }{\norm{\boldsymbol{v}_h}_{1,h}}
    \quad \forall (q_h, \xi_h) \in Q_h^{k-1} \times M_h^k,
  \end{equation}
  it can be shown that the unweighted
  $\mathcal{M} = {\rm bdiag}(Q, M)$ is spectrally equivalent to
  $\mathcal{S}$ with the same bounds as in
  \cref{eq:spectral-bound}. This approach was indeed taken in
  \cite{Rhebergen:2018b} for an HDG discretization of the Stokes
  problem. This result, however, is in some sense less general than
  \cref{lem:specEquivnew} and corresponds to setting
  $\omega_q = \omega_m = 1$ in \cref{rem:omegaqmremark}. More
  importantly, numerical simulations show that choosing
  $\omega_q > \omega_m = 1$ can offer significant improvements in
  performance of the preconditioner.
\end{remark}

Finally, we introduce the preconditioner used in practice,
$\mathbb{P} = {\rm bdiag}(\tilde{A}, Q, M)$, where $\tilde{A}$ is an
operator that is spectrally equivalent to $A$. For example, algebraic
multigrid (AMG) methods are designed for $H^1$-like discrete systems
such as $A$, and we choose $\tilde{A}^{-1}$ to be the application of
several iterations of AMG to $A$. Then, the following corollary is an
immediate consequence of the spectral equivalence result between
$\mathcal{S}$ and $\mathcal{M}$ in \cref{lem:specEquivnew}, the
discrete inf-sup condition \cref{eq:infsupbh} (so that $\mathcal{B}$
is full rank), the fact that $A$ is symmetric and positive definite,
and \cite[Theorem 5.2]{Pestana:2015}.

\begin{corollary}
  \label{cor:optimal_prec}
  There exist positive constants $c_1,c_2,c_3,c_4$, independent of
  $h$, such that the eigenvalues $\rho$ of $\mathbb{P}^{-1}\mathbb{A}$
  satisfy $\rho \in [-c_1,-c_2] \cup [c_3,c_4]$.
\end{corollary}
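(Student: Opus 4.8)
The plan is to recognize \cref{cor:optimal_prec} as a direct application of the abstract spectral theory for block-diagonally preconditioned symmetric saddle-point systems, as stated in \cite[Theorem 5.2]{Pestana:2015}. That theorem localizes the eigenvalues of $\mathbb{P}^{-1}\mathbb{A}$ into one negative and one positive interval, both bounded away from $0$ and $\infty$, provided that (i) the leading block $A$ is symmetric positive definite, (ii) the off-diagonal block $\mathcal{B} = [B^T\ C^T]^T$ has full row rank, (iii) the $(1,1)$ block of $\mathbb{P}$ is spectrally equivalent to $A$, and (iv) the Schur-complement block of $\mathbb{P}$ is spectrally equivalent to $\mathcal{S} = \mathcal{B}A^{-1}\mathcal{B}^T$, with all interval endpoints depending only on the four equivalence constants. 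The entire task is therefore to verify (i)--(iv) with $h$-independent constants and then invoke the theorem.

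For (i), symmetry of $\mathbb{A}$ is immediate from the construction of $A$, $B$, $C$ via the symmetric form $a_h$, and positive-definiteness of $A$ follows from the coercivity estimate in \cref{eq:stab_bound_ah}, which (as noted in the excerpt) also holds on $\boldsymbol{V}_h^k$. For (ii), the combined inf-sup condition \cref{eq:infsupbh} shows that $\mathcal{B}^T{\bf q} \neq 0$ for every ${\bf q} \neq 0$, so $\mathcal{B}$ is of full row rank with its smallest singular value controlled uniformly in $h$ by the inf-sup constant; this also guarantees that $\mathcal{S}$ is SPD. Condition (iii) holds by construction, since $\tilde{A}$ is a fixed number of AMG sweeps applied to $A$ and is designed to be spectrally equivalent to $A$ with $h$-independent constants. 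Condition (iv) is precisely \cref{lem:specEquivnew}, which delivers spectral equivalence of $\mathcal{S}$ with $\mathcal{M} = {\rm bdiag}(\omega_q Q, \omega_m M)$ with the explicit $h$-independent bounds $c_I/\sqrt{c_a}$ and $c_b/\sqrt{\alpha}$.

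The one point requiring care is reconciling the weighted matrix $\mathcal{M}$ of the Lemma with the practical Schur block ${\rm bdiag}(Q,M)$ appearing in $\mathbb{P}$. Since $\omega_q$ and $\omega_m$ are positive scalars, these two matrices are themselves spectrally equivalent with constants $\min(\omega_q,\omega_m)$ and $\max(\omega_q,\omega_m)$, and by transitivity of spectral equivalence one obtains spectral equivalence of ${\rm bdiag}(Q,M)$ with $\mathcal{S}$, again with $h$-independent constants, so (iv) holds for the preconditioner actually used. With (i)--(iv) established, \cite[Theorem 5.2]{Pestana:2015} produces intervals $[-c_1,-c_2] \cup [c_3,c_4]$ with $c_1,c_2,c_3,c_4$ expressible through the four equivalence constants and hence independent of $h$.

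I do not expect a genuine obstacle, as the result is an assembly of already-established facts rather than a new estimate; the only real work is the bookkeeping needed to ensure every constant entering the abstract bound is $h$-independent, and in particular tracking how the weight ratio $\omega_q/\omega_m$ propagates into the final interval endpoints without smuggling in any hidden dependence on the mesh.
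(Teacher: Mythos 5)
Your proposal is correct and follows essentially the same route as the paper, which presents the corollary as an immediate consequence of \cref{lem:specEquivnew}, the inf-sup condition \cref{eq:infsupbh} (giving full rank of $\mathcal{B}$), the symmetric positive-definiteness of $A$, and \cite[Theorem 5.2]{Pestana:2015}. Your additional step reconciling the weighted $\mathcal{M}$ of the lemma with the mass-matrix blocks appearing in $\mathbb{P}$ is a detail the paper leaves implicit, and you handle it correctly since $\omega_q$ and $\omega_m$ depend only on the $h$-independent inf-sup constants.
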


\Cref{cor:optimal_prec} implies that MINRES applied to
$\mathbb{P}^{-1}\mathbb{A}{\bf x} = \mathbb{P}^{-1}{\bf f}$ will
converge to a given tolerance in a fixed number of iterations
regardless of the mesh size $h$, i.e., that $\mathbb{P}$ is an optimal
preconditioner for \cref{eq:hybridized}.

\section{Numerical examples}
\label{s:numex}

The extended system \cref{eq:hybridized} is implemented in both
Firedrake \cite{Rathgeber:2016} and MFEM \cite{mfem-library}
with solver support from PETSc and PETSc4py 
\cite{petsc-user-ref, petsc-web-page,Dalcin2011}
and \emph{hypre} \cite{hypre}. In all our simulations the
interior penalty parameter is chosen to be $\eta=4k^2$ for 2d and
$6k^2$ for 3d, with $k$ the polynomial degree in our approximation
spaces. When using AMG to solve the velocity block,
we primarily use the default PETSc \emph{hypre} parameters, which 
include classical interpolation and Falgout coarsening. The only 
modification is we use a strong threshold of $0.5$ for 2d $k=2$,
$0.25$ for 2d $k=4$, and 0.75 for 3d, choices arrived at via minor
parameter tuning. Note, higher-order and higher dimensional DG
discretizations of diffusion are known to be more challenging for
vanilla AMG, so scalable convergence using AMG to approximate the
velocity inverse requires more inner AMG iterations (e.g., we
use four inner AMG iterations for $k=2$ and ten AMG iterations
for $k=4$). However, if one were to substitute something like the
continuous/discontinuous preconditioning framework introduced in
\cite{pazner2020efficient} for high-order DG, one could likely get
away with two or three iterations per outer block preconditioning
iteration, without degrading convergence and independent of element
order. Moreover, although methods in \cite{pazner2020efficient}
rely on an additional projection to a continuous discretization, they
remain simpler and less invasive than a solver designed specifically
for $H({\rm div})$.

We consider the performance of the preconditioner $\mathbb{P}$ defined
in \cref{s:precon}, as well as the following block symmetric
Gauss--Seidel variant:
\begin{equation}
  \label{eq:ldu}
  \mathbb{P}^s = (\mathbb{A}_L + \mathbb{P})(\mathbb{P})^{-1}(\mathbb{A}_L^T + \mathbb{P}),
\end{equation}
where $\mathbb{A}_L$ is the strictly lower triangular part of the
block matrix $\mathbb{A}$. Note, \eqref{eq:ldu} is exactly an
approximate block LDU decomposition for the original Stokes system
expressed as a block $2\times 2$  operator, with velocity corresponding
to the (1,1)-block, and pressure and Lagrange multiplier (together)
representing the (2,2) block, with approximate (2,2) Schur
complement \eqref{eq:schur} given by $\mathcal{M}$ \eqref{eq:mass-prec}.

\subsection{Effect of weighted mass matrices}

Consider a 2d example in which the source term and boundary conditions
are chosen such that the exact solution on a domain $\Omega=(-1,1)^2$
is given by
$\boldsymbol{u} = [\sin(3 x)\sin(3 y), \cos(3 x) \cos(3 y)]$ and
$p = \sin(\pi x)\cos(\pi y)$. We first explore convergence of the
iterative solver as a function of constants $\omega_q$ and $\omega_m$,
with \Cref{fig:iters} showing number of MINRES iterations to relative
preconditioned residual tolerance $10^{-8}$ as a function of
$\omega_q/\omega_m$ (for ratio $<1$, this corresponds to $\omega_q=1$
and $\omega_m > 1$). These tests are run in parallel in the MFEM
library on an unstructured triangular mesh from Gmsh
\cite{Geuzaine:2009} with 69,632 elements, and total
degrees-of-freedom (DOFs) over all variables given by
$\{1358784, 2229504, 3309120, 4597632\}$ for 2nd--5th order finite
elements, respectively. Velocity solves are performed to a small
residual tolerance using AMG-preconditioned GMRES (for all practical
purposes ``exact''), while mass matrices are approximately inverted
using one iteration of parallel symmetric Gauss--Seidel. The mass
matrices are actually block-diagonal and could be inverted exactly
via, e.g., local LU decompositions, but the Gauss--Seidel iteration
provides identical convergence in the larger block preconditioning
iteration and does not require inverting many dense element matrices
(which, for higher order or 3d is a nontrivial computational cost).

First, note in \Cref{fig:iters} that the modified scalings of the mass
matrices, in particular $\omega_q > 1$, provide a 2--3$\times$
reduction in iteration count for 2nd--5th order finite
elements. Moreover, letting $\omega_m = 1$, we see that the best
constant $\omega_q > 1$ is moderately independent of element order,
with a choice of $\omega_q \approx 24-32$ providing near optimal
convergence in all cases (within 2-3 iterations of best observed
convergence; the lower end is slightly preferable for lower-order
elements, while the higher end preferable for higher-order
elements). Finally, we point out that the preconditioning is actually
\emph{super} robust in element order, with the minimum iteration count
slowly {decreasing} as element order increases: choosing the best
tested weight $\omega_q$ for each order, we have $\{29,26,23,21\}$
iterations for 2nd--5th order, respectively.

\begin{figure}[!htb]
  \centering
  \includegraphics[width=0.5\textwidth]{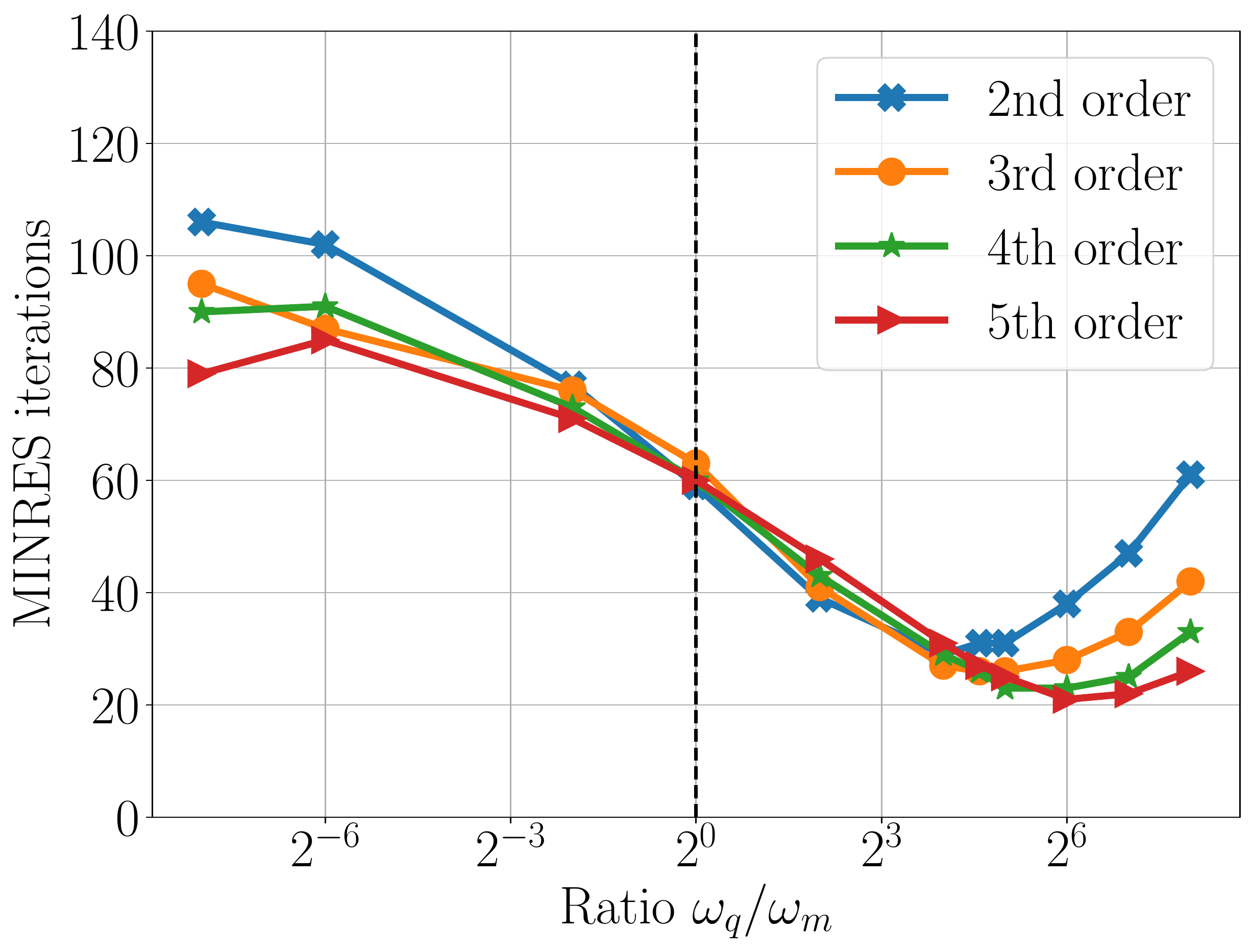}
  \caption{MINRES iterations to relative preconditioned residual tolerance
  $10^{-8}$ as a function of ratio of constants $\omega_q/\omega_m$ used
  in the mass-matrix approximation \eqref{eq:mass-prec} to the pressure
  Schur complement \eqref{eq:schur}. Black dotted line denotes the
  simplest case of $\omega_q=\omega_m=1$. On the right of this line,
  $\omega_m=1$, while on the left $\omega_q = 1$.}
      \label{fig:iters}
\end{figure}

\subsection{Two dimensional robustness test case}

Next we run 2d experiments in Firedrake to demonstrate $h$-robustness,
to compare exact and inexact solves for the velocity block, and to
compare iteration counts with a comparable preconditioner applied to a
true BDM discretization \cite{Cockburn:2007b,Wang:2007}. We choose the
source term and boundary conditions in \cref{eq:stokes} such that the
exact solution is given by
$\boldsymbol{u} = [\sin(\pi x)\sin(\pi y) + 2, \cos(\pi x) \cos(\pi y)
- 1]$ and $p = \sin(\pi x)\cos(\pi y)$ on the unit square
$\Omega=(0,1)^2$. In \cref{tab:stokes-results} we list the number of
iterations MINRES requires to reach a relative preconditioned residual
tolerance of $10^{-8}$ for the extended hybridized discretization,
using exact and approximate inner solves for $A$, $Q$, and $M$. For
approximate inner solvers, both mass matrices, $Q$ and $M$, are
approximated by one symmetric Gauss--Seidel relaxation, and $A$ is
approximated by four AMG iterations for $k=2$ and ten AMG iterations
for $k=4$. We note that both preconditioners (with exact and
approximate solves) are optimal in $h$, with only minor degradation in
convergence when moving from exact inner solves to approximate inner
solves. The modified constant $\omega_q > 1$ helps in all cases, and
is most notable for block-diagonal preconditioning with $k=4$,
reducing the iteration count $2.5\times$ compared with $\omega_q =
1$. In addition, using $\omega_q=24$, we again see the super
robustness in element order, with lower iteration counts in all cases
for $k=4$ compared with $k=2$.

\begin{table}[h!]
  \caption{Number of MINRES iterations required to reach a relative
    preconditioned residual tolerance of $10^{-8}$ for preconditioners
    $\mathbb{P}$ and $\mathbb{P}^s$ applied to \cref{eq:hybridized},
    with preconditioner constants $\omega_q = 1$ and $\omega_q = 24$.
    The number of elements in the mesh is given by $N^2 \times 2$.}
  \begin{center}
    \begin{tabular}{ccc||cc|cc||cc|cc}
      \hline
      &&& \multicolumn{4}{c||}{$\omega_q = 1$} & \multicolumn{4}{c}{$\omega_q = 24$} \\
      \hline
      &&& \multicolumn{2}{c|}{Exact} & \multicolumn{2}{c||}{Approx.}
        & \multicolumn{2}{c|}{Exact} & \multicolumn{2}{c}{Approx.} \\
      $k$ & $N$ & DOFs & $\mathbb{P}$ & $\mathbb{P}^s$ & $\mathbb{P}$ & $\mathbb{P}^s$
        & $\mathbb{P}$ & $\mathbb{P}^s$ & $\mathbb{P}$ & $\mathbb{P}^s$ \\
      \hline
      2 & 16  & 6144   & 136 & 60 & 138 & 64   & 64 & 39 & 64 & 43 \\
      2 & 32  & 24576  & 134 & 57 & 135 & 60   & 66 & 37 & 66 & 42 \\
      2 & 64  & 98304  & 134 & 50 & 134 & 56   & 66 & 34 & 66 & 38 \\
      2 & 128 & 393216 & 132 & 46 & 134 & 51   & 66 & 34 & 66 & 39 \\
      \hline
      4 & 16  & 24480   & 160 & 59 & 162 & 65   & 66 & 33 & 66 & 39 \\
      4 & 32  & 97600   & 160 & 54 & 162 & 60   & 64 & 32 & 64 & 38 \\
      4 & 64  & 389760  & 156 & 54 & 157 & 60   & 62 & 31 & 62 & 37 \\
      4 & 128 & 1557760 & 154 & 53 & 155 & 60   & 61 & 28 & 61 & 34 \\
      \hline
    \end{tabular}
  \end{center}
  \label{tab:stokes-results}
\end{table}

To compare, \cref{tab:stokes-results-bdm} now shows analogous
experiments applied to solve \cref{eq:DGmethod} without
hybridizing. Using similar analysis as in \cref{s:precon} for the
extended system \cref{eq:hybridized} it can be shown that if $R$ is
spectrally equivalent to $A$, then
$\tilde{\mathbb{P}} = {\rm bdiag}(R, M)$ is an optimal preconditioner
for \cref{eq:DGmethod}. Analogous to the hybridized case, we denote
the block diagonal and approximate LDU \eqref{eq:ldu} variations
$\tilde{\mathbb{P}}$ and $\tilde{\mathbb{P}}^s$, respectively.  The
main difference between solving \cref{eq:DGmethod} and
\cref{eq:hybridized} is the basis for the velocity space. Indeed for
\cref{eq:DGmethod} $u \in \mathbb{R}^{n_u}$ is the vector of
coefficients with respect to the basis for the BDM space
${\rm {\bf BDM}}_h^k$ while for \cref{eq:hybridized}
$u \in \mathbb{R}^{n_u}$ is the vector of coefficients with respect to
the basis for the DG space $\boldsymbol{V}_h^k$. As mentioned
previously, having the velocity space represented in $H({\rm div})$ is
significantly more challenging for black-box solvers such as AMG.

As for the hybridized case, in the preconditioner for
\cref{eq:DGmethod} we consider two choices of inner approximations: a
direct solver applied to $A$ and the pressure mass matrix, or
approximate inverses as used in \cref{tab:stokes-results}, with one
symmetric Gauss--Seidel iteration applied to the mass matrix and
$R^{-1} := \mathcal{O}(1)$ AMG iterations applied to $A$. From
\cref{tab:stokes-results-bdm} we note that if a direct solver is used
on $A$, then the preconditioners $\tilde{\mathbb{P}}$ and
$\tilde{\mathbb{P}}^s$ are optimal, and the iteration counts are
1.5--2.5$\times$ less than those observed for the extended system
\cref{eq:hybridized} in \cref{tab:stokes-results}.  However, no
convergence was obtained in any cases after switching to AMG to
precondition $A$. It should be noted that even with significant
parameter tuning, numerical tests indicated that even hundreds of
AMG-preconditioned Krylov iterations were unable to make a notable
reduction in residual when applied to the $H({\rm div})$-DG
discretization of the vector Laplacian. Thus, the fact that
block-preconditioning with AMG-approximate inverses did not converge
for BDM discretizations (see \cref{tab:stokes-results-bdm}) makes
sense, and is simply because AMG is not an effective preconditioner
for the $H({\rm div})$-DG discretization of the vector Laplacian.

\begin{table}[hbt!]
  \caption{Number of MINRES iterations required to reach a relative
    preconditioned residual tolerance of $10^{-8}$ for preconditioners
    $\tilde{\mathbb{P}}$ and $\tilde{\mathbb{P}}^s$ applied to \cref{eq:DGmethod}.
    Results are shown for order $k=2$ and $k=4$, both considered with
    exact inner solves and approximate inner solves. The number of
    elements in the mesh is given by $N^2 \times 2$. A $*$ means there
    was no convergence within 500 iterations.
    }
  \begin{center}
    \begin{tabular}{ccc|cc|cc}
      \hline
      &&& \multicolumn{2}{c|}{Exact} & \multicolumn{2}{c}{Approx.} \\
      $k$ & $N$ & DOFs & $\tilde{\mathbb{P}}$ & $\tilde{\mathbb{P}}^s$ & $\tilde{\mathbb{P}}$ & $\tilde{\mathbb{P}}^s$ \\
      \hline
      2 & 16  & 5472   & 32 & 16 & * & * \\
      2 & 32  & 21696  & 35 & 15 & * & * \\
      2 & 64  & 86400  & 33 & 15 & * & * \\
      2 & 128 & 344832 & 33 & 14 & * & * \\
      \hline
      4 & 16  & 16800   & 37 & 16 & * & * \\
      4 & 32  & 66880   & 37 & 16 & * & * \\
      4 & 64  & 266880  & 37 & 15 & * & * \\
      4 & 128 & 1066240 & 37 & 15 & * & * \\
      \hline
    \end{tabular}
  \end{center}
  \label{tab:stokes-results-bdm}
\end{table}

\subsection{Three dimensional robustness test case}

Last, we demonstrate that the proposed preconditioning naturally
extends to 3d problems, with even better convergence than the 2d
setting. For this we consider a domain $\Omega=(0,1)^3$ with the exact
solution given by
$\boldsymbol{u}=[\pi (\sin(\pi x)\cos(\pi y)-\sin(\pi x)\cos(\pi z)),
\pi (\sin(\pi y) \cos(\pi z)-\cos(\pi x) \sin(\pi y)), \pi (\cos(\pi
x) \sin(\pi z)-\cos(\pi y) \sin(\pi z))]$ and
$p=\sin(\pi x) \cos(\pi y) \sin(2 \pi z)$. Here we discretize in MFEM
using a 3d unstructured tetrahedral mesh generated with Gmsh
\cite{Geuzaine:2009} and apply the LDU version of block
preconditioning, $\mathbb{P}^s$, to the extended hybridized system.
For approximate solves, 14 AMG iterations are applied (because 3d DG
discretizations of diffusion are more challenging to AMG than 2d). We
again see the preconditioner is optimal in $h$, super robust in
element order (although AMG is not, hence the growth in approximate
iteration count between $k=2$ and $k=4$), and the weighting
$\omega_q = 32$ provides a consistent $3\times$ or more reduction in
iteration count compared with $\omega_q = 1$.

\begin{table}[h!]
  \caption{Number of MINRES iterations required to reach a relative
    preconditioned residual tolerance of $10^{-8}$ for preconditioner
    $\mathbb{P}^s$ applied to \cref{eq:hybridized} for a 3d problem,
    using order $k=2$ and $k=4$ elements, and preconditioner constants
    $\omega_q = 1$ and $\omega_q = 32$. $*$ indicates did not finish
    due to cluster time constraints.}
  \begin{center}
    \begin{tabular}{ccc||cc||cc}
      \hline
      & \multicolumn{2}{c||}{$\mathbb{P}^s$} & \multicolumn{2}{c||}{$\omega_q = 1$} & \multicolumn{2}{c}{$\omega_q = 32$} \\
      $k$ & Cells & DOFs & Exact & Approx. & Exact & Approx. \\
      \hline
      2 & 4192  & 195712 & 108 & 124 & 34 & 42 \\   %
      2 & 33536  & 1554176 & 88 & 104 & 30 & 39 \\   %
      2 & 268288  & 12387328 & 70 & 91 & 26 & 39 \\  %
      \hline
      4 & 4192  & 656960 & 98 & 156 & 27 & 59 \\   %
      4 & 33536  & 5226880 & 84 & 141 & 25 & 54 \\   %
      4 & 268288  & 41699840 & * & * & 23 & 54 \\   %
      \hline
    \end{tabular}
  \end{center}
  \label{tab:3d-stokes-results}
\end{table}

\section{Conclusions}
\label{sec:conclusions}

Using BDM elements for the velocity field in a DG discretization of
the Stokes equations results in a velocity field that is exactly
divergence-free. As a consequence, it can be shown that the
discretization is pressure-robust. Unfortunately, ``black-box'' solvers
perform poorly on discretizations using BDM elements and specialized
(custom) preconditioners are required for the efficient solution of
such systems.

As an alternative, one may hybridize the discretization, so that
the velocity lives in a ``friendlier'' standard $L^2$-DG space. 
Typically, hybridization allows one to eliminate all element degrees-of-freedom
and to obtain a global system for a Lagrange multiplier only. However,
since the BDM element is a nonconforming element for the Stokes
problem, elimination of element degrees-of-freedom is not
possible. Nevertheless, as we have shown in this manuscript, the
extended hybridized system is much better suited for ``black-box''
solvers. We have presented an optimal preconditioner for the
hybridized system of which each component is directly available in
most linear algebra packages. The preconditioner is robust in $h$,
super robust in element order, and naturally effective in 2d and 3d
(in fact, iteration counts are lower in 3d). Moreover, no custom
solvers are necessary.

Last, we point out that the hybridization extends naturally to
arbitrary Reynolds Navier--Stokes problems, and there exist fast
solvers for high-Reynolds velocity fields of the extended hybridized
system \cite{AIR,lAIR}. Future work will focus on developing block
preconditioning techniques for the extended hybridized system of
Navier--Stokes.

\section*{Acknowledgements}
SR gratefully acknowledges support from the Natural Sciences
and Engineering Research Council of Canada through the Discovery
Grant program (RGPIN-05606-2015). BSS was supported as a Nicholas C. Metropolis Fellow under the
Laboratory Directed Research and Development program of Los Alamos
National Laboratory. Los Alamos National Laboratory report number LA-UR-22-22682.

\bibliographystyle{plain}
\bibliography{references}
\end{document}